\def\Id{\operatorname{Id}}
\author{Xi Sisi Shen}
\address{Department of Mathematics\\
  Columbia University\\
  New York, NY 10027, USA}
\email[X. S. Shen]{xss@math.columbia.edu}
\author{Pranay Talla}
\address{Department of Mathematics\\
  Columbia University\\
  New York, NY 10027, YSA}
\email[P. Talla]{pt2555@columbia.edu}
\newtheorem{thm}{Theorem}
\newtheorem{lemma}{Lemma}
\numberwithin{equation}{section}
\begin{document}
\bibliographystyle{amsplain}

\title{Non-preservation of $\alpha$-concavity for the porous medium equation in higher dimensions}

\begin{abstract}
In this short note, we prove that $\alpha$-concavity of the pressure is not preserved for the porous medium equation in dimensions $n=3$ and higher for any $\alpha\in [0,1]\backslash \{\frac{1}{2}\}$. Together with the result of Chau-Weinkove for $n=2$, this fully resolves an open problem posed by V\'asquez on whether pressure concavity is preserved in general for the porous medium equation.
\end{abstract}
\maketitle

\section{Introduction}\label{Intro}

The porous medium equation (PME), also known as the nonlinear heat equation, models the diffusion of an ideal gas in a porous medium. For a gas density function $u(x,t)\ge 0$, where $(x,t)\in \mathbb{R}^n\times (0,\infty)$, the PME is given by
\begin{align}
    \frac{\partial u}{\partial t} = \Delta (u^m), 
    \label{pme}
\end{align}
for a fixed $m>1$ and initial data $u(x,0)=u_0(x)$ for $ x\in\mathbb{R}^n.$ For any $t>0,$ the function sending $x$ to $u(x,t)$ is smooth if we restrict to the set 
\begin{align*}
    \Omega_t = \{x\in\mathbb{R}^n: u(x,t)>0\}. 
\end{align*}
Defining the \textit{pressure} as $v=(m/(m-1))u^{m-1}$, where $u$ is the gas density, we can rewrite the PME in terms of $v$ as
\begin{align}
    \frac{\partial v}{\partial t} = (m-1)v\Delta v + |\nabla v|^2\label{pressure_eqn}
\end{align}
on the set $\Omega_t$ with $v(x,0)=v_0(x)$ for $x\in \Omega_0.$
Given the parabolic nature of the PME, it is of great interest to study whether the concavity of solutions is preserved. For $\alpha>0$ and a function $f$, we say that $f$ is $\alpha$-concave if $f^\alpha$ is concave, in particular, $\alpha=1$ corresponds to the standard definition of concavity. When $\alpha=0$, we say $f^\alpha$ is concave if $\log f$ is concave. When $f$ is twice differentiable, concavity of $f$ is equivalent to $f$ having a negative semi-definite Hessian matrix.
In 2007, V\'asquez posed the following problem in \cite{vasquez07}:\\

\begin{displayquote}
``Prove or disprove the preservation of pressure concavity for the\\ solutions of the PME in several space dimensions." 
\end{displayquote}
\vspace{4mm}

For $n=1$, B\'enilan and V\'azquez \cite{benilan-vasquez87} have shown that concavity of pressure is preserved. Daskopoulos-Hamilton-Lee \cite{dhl01} prove that root concavity of pressure, which corresponds to $\alpha=\frac{1}{2}$, is preserved in all dimensions. 

On the other hand, Ishige-Salani \cite{ishige-salani10} show that there exists an $\alpha\in (0,\frac{1}{2})$ for which $\alpha$-concavity of the pressure is not preserved. Extending this result, Chau-Weinkove \cite{chau-weinkove24} show that $\alpha$-concavity of the pressure for $\alpha\in[0,1]\backslash\{\frac{1}{2}\}$, and particularly in the case of $\alpha=1$, is not preserved in general for $n=2$. They show this by constructing explicit examples where $\alpha$-concavity is instantaneously lost for any $\alpha\in [0,1]\backslash\{\frac{1}{2}\}.$ Chau-Weinkove also prove that $\alpha$-concavity of solutions to the one-phase Stefan problem is not preserved for $\alpha\in[0,\frac{1}{2})$ in \cite{chau-weinkove21}.

In this paper, we extend the result of \cite{chau-weinkove24} to higher dimensions, showing that $\alpha$-concavity of the pressure for $\alpha\in [0,1]\backslash\{\frac{1}{2}\}$ is not preserved in general for the porous medium equation in any dimension $n$, and in particular for $\alpha=1$. Due to the result in \cite{dhl01} for $\alpha=\frac{1}{2}$, the above result for $\alpha\in [0,1]\backslash\{\frac{1}{2}\}$ is sharp. Specifically, we establish the following theorem:
\begin{thm}\label{main_thm}
    Let B be the open unit ball in $\mathbb{R}^n$ centered at the origin. For all $n$, given $\alpha \in [0,1] \backslash \{\frac{1}{2}\}$, there exists $v_0\in C^{\infty}(\overline{B})$ which is strictly positive on B and vanished on $\partial B$ with the following properties:
    \begin{enumerate}
        \item $v_0$ is concave on B.
        \item $\nabla v_0$ does not vanish at any point of $\partial B$.
        \item Let $v(t)$ be the solution of the porous medium equation \eqref{pressure_eqn} starting at $v_0$. Then there exists $\delta>0$ such that $v(t)$ is not $\alpha$-concave in a neighborhood of the origin for $t\in (0,\delta)$.
    \end{enumerate}
\end{thm}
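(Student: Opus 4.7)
My plan is to reduce the problem to the two-dimensional Chau-Weinkove theorem \cite{chau-weinkove24}. Let $\tilde{v}_0 \in C^\infty(\overline{D})$ denote their initial pressure on the closed unit disk $\overline{D} \subset \mathbb{R}^2$, satisfying the 2D analogues of conditions (1)--(3). I would construct $v_0 \in C^\infty(\overline{B})$ by extending $\tilde{v}_0$ with strong concavity in the transverse directions $x_3, \ldots, x_n$. A natural local model near the origin is
\[
v_0(x_1, \ldots, x_n) = \tilde{v}_0(x_1, x_2) - K(x_3^2 + \cdots + x_n^2),
\]
for $K > 0$ large, to be patched smoothly to boundary data vanishing on $\partial B$ with non-vanishing normal derivative. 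A more elegant but more delicate variant is the scaling ansatz $v_0(x) = \rho^2\,\tilde{v}_0(x_1/\rho, x_2/\rho)$ with $\rho = \sqrt{1 - x_3^2 - \cdots - x_n^2}$, which vanishes on $\partial B$ automatically, though smoothness must be checked where $\rho \to 0$.

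The core step is to compare the Hessian evolution of $v^\alpha$ at the origin with its 2D counterpart. Using the pressure equation $v_t = (m-1) v \Delta v + |\nabla v|^2$ and arranging $\nabla v_0(0) = 0$, a direct calculation gives
\[
\partial_t \partial_i\partial_j v^\alpha \big|_{(0,0)} = \alpha v_0(0)^{\alpha-1}\Big[\alpha(m-1)\Delta v_0(0)\,\partial_i\partial_j v_0(0) + (m-1) v_0(0)\,\partial_i\partial_j\Delta v_0(0) + 2\sum_k \partial_i\partial_k v_0(0)\,\partial_j\partial_k v_0(0)\Big].
\]
Under the extension above, $\mathrm{Hess}(v_0)(0)$ is block-diagonal, so for $i, j \in \{1, 2\}$ the factors $\partial_i\partial_j v_0(0)$, $\partial_i\partial_j \Delta v_0(0)$, and $\sum_k \partial_i\partial_k v_0(0)\,\partial_j\partial_k v_0(0)$ are identical to their 2D counterparts; only $\Delta v_0(0)$ picks up an extra additive term $-2K(n-2)$. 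Since $\partial_i\partial_j \tilde{v}_0(0) \leq 0$ by concavity, this extra contribution has the same sign as the 2D expression it modifies and only reinforces the sign of the eigenvalue responsible for the loss. The Chau-Weinkove argument producing loss of $\alpha$-concavity near the origin for $\alpha \neq 1/2$ should therefore transport to the $n$-dimensional setting, with a minor modification to handle the $\alpha = 0$ case (where $v^\alpha$ is interpreted as $\log v$).

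The main obstacle is the global construction of $v_0$: the interpolation between the local model near the origin and the boundary data must preserve concavity on all of $\overline{B}$ while keeping $\nabla v_0 \neq 0$ on $\partial B$. Choosing $K$ large addresses concavity in the transverse directions, but uniform concavity across the transition region requires care and is the principal technical issue. A secondary obstacle is to distill the precise 2D mechanism from \cite{chau-weinkove24} and verify that it depends only on derivatives of $\tilde{v}_0$ at the origin up to some fixed finite order, since this is exactly what our extension is designed to preserve.
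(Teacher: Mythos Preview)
Your reduction-to-2D strategy is different from the paper's approach, which directly writes down an explicit $n$-dimensional polynomial $w$ (generalizing the Chau--Weinkove formula by adding symmetric terms in $x_2,\dots,x_n$) and verifies the sign conditions by hand.  Your idea could in principle be made to work, but the argument as written contains a genuine error that breaks it.

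The problem is the assumption $\nabla v_0(0)=0$.  The Chau--Weinkove initial data does \emph{not} satisfy this: in their construction (and in the paper's $n$-dimensional version) one has $w_1(0)=a>0$, hence $\nabla v_0(0)\neq 0$.  The nonvanishing gradient is not incidental---it is the entire mechanism.  Indeed, suppose $w=v^\alpha$ is concave with $\nabla w(0)=0$, $w_{11}(0)=0$, and $D^2w(0)$ diagonal.  Then every term in the evolution formula for $\partial_t w_{11}$ at the origin vanishes except $(m-1)w^{1/\alpha}\sum_k w_{kk11}(0)$.  But concavity of $w$ forces $w_{11}\le 0$ everywhere, and since $w_{11}(0)=0$ is a maximum along each coordinate line through the origin, one gets $w_{11kk}(0)\le 0$ for every $k$.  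Hence $\partial_t w_{11}(0)\le 0$, and no instantaneous loss of $\alpha$-concavity can occur.  So you cannot ``arrange'' $\nabla v_0(0)=0$; the simplified formula you displayed and the ``reinforcement'' argument built on it are both inapplicable.

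If you drop that assumption and use the full evolution formula \eqref{time_derivative} with the actual Chau--Weinkove data extended by $-K\sum_{i\ge 3}x_i^2$, the extra $k\ge 3$ terms do \emph{not} all have a favorable sign; several scale like $K$ with competing signs.  A careful computation shows they in fact cancel (so the reduction is salvageable), but this is a cancellation, not a reinforcement, and it requires the full formula rather than your truncated one.  You would then still face the global patching problem you flagged, which the paper handles by an explicit cutoff-plus-radial-barrier construction.
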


 The paper is organized as follows. In Section \ref{non-pres}, we construct a specific concave function in Lemma \ref{construction} and use it to prove Theorem \ref{main_thm} by providing an explicit example for each $\alpha\in[0,1]\backslash\{\frac{1}{2}\}$ for which $\alpha$-concavity is instantaneously lost at an interior point of the domain. 

\section{Non-preservation of interior concavity}\label{non-pres}
Let $v$ be a local positive smooth solution to the PME in terms of the pressure \eqref{pressure_eqn} and let $w=v^\alpha$. We first compute the closed form expression for $\frac{\partial}{\partial t} w_{11}$ for $\alpha> 0$  (see Equation (3.2) of \cite{chau-weinkove24}), where the subscripts on $w$ refer to spatial partial derivatives and indices are assumed to be summed over:

\begin{align}\label{time_derivative}
    \frac{\partial}{\partial t} w_{11} =& (m-1)w^{\frac{1}{\alpha}}w_{kk11} + \frac{2(m-1)}{\alpha}w^{\frac{1}{\alpha}-1}w_1w_{kk1} \nonumber\\
    &+ \frac{(m-1)}{\alpha}\Big(\frac{1}{\alpha}-1\Big)w^{\frac{1}{\alpha}-2}w_1^2w_{kk}+\frac{(m-1)}{\alpha}w^{\frac{1}{\alpha}-1}w_{11}w_{kk} \nonumber\\
    &+ \frac{1}{\alpha}(1+(m-1)(1-\alpha))\Big\{\big(\frac{1}{\alpha}-2\big) \big(\frac{1}{\alpha}-1\big) w^{\frac{1}{\alpha}-3}w_1^2w_k^2 \\
    &+ \Big(\frac{1}{\alpha}-1\Big)w^{\frac{1}{\alpha}-2}w_{11}w_k^2 + 4\big(\frac{1}{\alpha}-1\big)w^{\frac{1}{\alpha}-2}w_1w_kw_{k1} \nonumber\\
    &+ 2w^{\frac{1}{\alpha}-1}w_{1k}^2+2w^{\frac{1}{\alpha}-1}w_kw_{k11}\Big\}\nonumber.
\end{align}

We must also compute $\frac{\partial}{\partial t}w_{11}$ for $\alpha=0$, as follows:

\begin{align}\label{time_derivative_alpha_0}
    \frac{\partial}{\partial t}w_{11} =& (m-1) e^w w_{kk11} + 2(m-1)e^w w_1w_{kk1}\nonumber \\
    &+ (m-1)e^w w_1^2 w_{kk} + (m-1)e^w w_{11} w_{kk} + me^w w_1^2 w_k^2 \\
    &+ me^w w_1 w_k w_{k1} + 2me^w w_{1k}^2 + 2me^w w_k w_{k11}.\nonumber
\end{align}

Let $B_\rho(0)$ be the open ball of radius $\rho$ in
$\mathbb{R}^n$ centered at the origin. We now construct a specific concave function on $\overline{B_\rho(0)}$ in the following lemma:
\begin{lemma}\label{construction}
    For each $\alpha\in [0,1]\backslash\{\frac{1}{2}\}$ there exists a $\rho>0$ and a smooth positive function $w$ on $\overline{B_\rho(0)}$ such that
    \begin{enumerate}
        \item $w_{ii}(0)<0$ for $2\le i\le n$ and $w_{ij}(0)=0$ otherwise.
        \item $(D^2 w)<0$ on $\overline{B_\rho(0)}\backslash \{0\}.$
        \item If $\alpha\neq 0$ then the right hand side of \eqref{time_derivative} is positive at the origin. If $\alpha = 0$ then the right hand side of \eqref{time_derivative_alpha_0} is positive at the origin.
    \end{enumerate}
\end{lemma}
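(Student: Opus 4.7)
My strategy is to lift the two-dimensional construction of Chau--Weinkove in \cite{chau-weinkove24} to arbitrary dimension $n\ge 3$ by a modest perturbation in the variables $x_3,\ldots,x_n$. For the given $\alpha\in[0,1]\setminus\{1/2\}$, let $\tilde w(x_1,x_2)$ denote a smooth positive function on some closed disc $\overline{B^{(2)}_{\rho_0}(0)}\subset\mathbb R^2$ satisfying the $n=2$ analogue of (1)--(3); such $\tilde w$ is produced by Chau--Weinkove in their proof of Theorem \ref{main_thm} in two dimensions. I consider the function
\[
    w(x_1,\ldots,x_n)=\tilde w(x_1,x_2)-\lambda\sum_{i=3}^n x_i^{2}-\mu\,x_1^{2}\sum_{i=3}^n x_i^{2},
\]
for positive parameters $\lambda,\mu$ and radius $\rho\le\rho_0$ to be selected at the end. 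The first correction provides $w_{ii}(0)<0$ for $i\ge 3$, while the second is designed to prevent the $(1,1)$ entry of the Hessian from vanishing at points of the form $(0,0,x_3,\ldots,x_n)\ne 0$.

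Property (1) is immediate by differentiation. For (2) I would write $D^2 w$ in $2+(n-2)$ block form and apply Schur's criterion with respect to the strictly negative lower block $-(\lambda+2\mu x_1^2)I_{n-2}$; the resulting Schur complement in the upper block equals
\[
    D^2\tilde w(x_1,x_2)\;+\;\frac{2\mu\bigl(\sum_{i\ge 3}x_i^{2}\bigr)(6\mu x_1^{2}-\lambda)}{\lambda+2\mu x_1^{2}}\,\mathrm{diag}(1,0).
\]
Imposing $\rho^2<\lambda/(6\mu)$ makes the added term negative semidefinite, so the Schur complement is bounded above by $D^2\tilde w(x_1,x_2)$; strict negative definiteness then follows from the strict concavity of $\tilde w$ when $(x_1,x_2)\ne 0$, and at points where $(x_1,x_2)=0$ but $\sum_{i\ge 3}x_i^2>0$ the Schur complement reduces to $\mathrm{diag}(-2\mu\sum_{i\ge 3} x_i^2,\,\tilde w_{22}(0))$, which is strictly negative.

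For (3), I would compute the right-hand side of \eqref{time_derivative} (or \eqref{time_derivative_alpha_0} if $\alpha=0$) at the origin and compare with the two-dimensional value. Because each added term carries a factor of some $x_i^2$ with $i\ge 3$, one checks that $w_k(0)=0$ for $k\ge 3$, while $w_{1k}(0)$, $w_{k11}(0)$ and $w_{kk1}(0)$ vanish for $k\ge 3$, and every first-, second- or third-order partial of $w$ at the origin involving only indices $\le 2$ agrees with the corresponding partial of $\tilde w$. The only changes relative to the two-dimensional calculation therefore come from
\[
    \sum_k w_{kk}(0)=\sum_{k=1}^{2}\tilde w_{kk}(0)-(n-2)\lambda,\qquad \sum_k w_{kk11}(0)=\sum_{k=1}^{2}\tilde w_{kk11}(0)-4(n-2)\mu.
\]
Hence the $n$-dimensional right-hand side is the two-dimensional one plus a correction linear in $\lambda$ and $\mu$ with coefficients depending only on $m$, $\alpha$, $\tilde w(0)$ and $\tilde w_1(0)$; since the two-dimensional value is strictly positive by hypothesis, taking $\lambda,\mu>0$ small enough preserves positivity.

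The main obstacle is the coupling between (2) and (3): the Schur estimate forces $\rho^2<\lambda/(6\mu)$, whereas (3) requires $\lambda,\mu$ small. This is resolved by choosing the parameters sequentially --- first fix $\lambda,\mu>0$ small enough to secure (3), and only afterwards shrink $\rho$ below $\min\{\rho_0,\sqrt{\lambda/(6\mu)}\}$ --- so no circular dependency arises.
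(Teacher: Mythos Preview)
Your argument is correct (modulo harmless arithmetic slips in the Schur complement coefficients: the lower block is $-2(\lambda+\mu x_1^2)I_{n-2}$ and the correction term is $\frac{2\mu S(3\mu x_1^2-\lambda)}{\lambda+\mu x_1^2}\mathrm{diag}(1,0)$, so the sharp constraint is $\rho^2<\lambda/(3\mu)$ rather than $\lambda/(6\mu)$), but it takes a genuinely different route from the paper. The paper does \emph{not} reduce to the two-dimensional case; instead it writes down, separately for $\alpha\in[0,\tfrac12)\cup\{1\}$ and for $\alpha\in(\tfrac12,1)$, an explicit polynomial in all $n$ variables in which $x_2,\ldots,x_n$ enter symmetrically, and then verifies (2) by computing the leading principal minors of $D^2w$ directly and (3) by choosing a free scalar parameter ($a$ or $b$) large. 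Your approach treats the Chau--Weinkove construction as a black box and extends it by a small perturbation in the new variables, using Schur complements for (2) and a first-order perturbation argument for (3). The advantage of your method is its transparency and modularity: it makes clear that nothing new happens in higher dimensions and would apply verbatim to any future improvement of the planar construction. The advantage of the paper's method is that it is self-contained---it does not invoke \cite{chau-weinkove24} as a lemma---and yields a single closed-form example in each $\alpha$-range without a limiting choice of small parameters.
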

\begin{proof} We will consider two separate cases:\\

    \textbf{Case 1.} $\alpha\in [0,\frac{1}{2})$
 or $\alpha=1$. For $a>0$ a constant to be determined, define 
 \begin{align*}
    w = 1 + ax_1 - x_1^4 + \Big(\sum_{i=2}^n x_1x_i^2 -x_i^2 - 2x_1^2x_i^2\Big).
 \end{align*}
 Then the only nonzero components of the Hessian matrix $(D^2 w)$ are
 \begin{align*}
     w_{11}&=-12x_1^2-\sum_{i=2}^n 4x_i^2,
\end{align*}
and for $i=2,\ldots n$,
\begin{align*}
     w_{1i}&=w_{i1}=2x_i-8x_1x_i\\
     w_{ii}&=2x_1-2-4x_1^2.
 \end{align*}
 Condition (1) follows immediately by substituting $x_i=0$ for all $i=1,\ldots,n$. To verify Condition (2), notice that the determinant of the $j^{th}$ leading principal minor, for $j=2,\ldots, n$ is given by
 \begin{align*}
     \det([D^2w]_j)=(-1)^j2^j\Big(6x_1^2+2\sum_{i=2}^nx_i^2-\sum_{i=2}^jx_i^2\Big) + \text{higher order terms}.
 \end{align*}
 Note that $$6x_1^2+2\sum_{i=2}^nx_i^2-\sum_{i=2}^jx_i^2>0$$ away from the origin. Using the leading principal minors method, and the fact that we can choose $\rho>0$ sufficiently small so that $$ \det([D^2w]_1)<0, \ \det([D^2w]_{(2k)})>0 , \ \det([D^2w]_{(2k+1)})<0 $$ for $(2k),(2k+1)\in \{2,\ldots,n\}$, we have that the Hessian $(D^2w)$ is negative definite. Finally, let us check Condition (3). Observe that at the origin, $w=1$ and the only non-negative derivatives of $w$ are
 \begin{align*}
     w_1&=a\\
     w_{kk}&=-2\\
     w_{kk1}&=2\\
     w_{1111}&=-24\\
     w_{kk11}&=-8
 \end{align*}
 for $k=2,\ldots,n$. For $\alpha> 0$, the right hand side of \eqref{time_derivative} is positive at the origin exactly when
 \begin{align*}
     &-(24+8(n-1))(m-1)+\frac{4(m-1)}{\alpha}a(n-1)-\frac{2(m-1)}{\alpha}\Big(\frac{1}{\alpha}-1\Big)a^2(n-1)\\
     & \hspace{26mm} +\frac{1}{\alpha}(1+(m-1)(1-\alpha))\Big(\frac{1}{\alpha}-1\Big)\Big(\frac{1}{\alpha}-2\Big)a^4>0.
 \end{align*}
\vspace{4mm}
If $\alpha=1$ then the only non-zero terms are the first two and the second term will dominate by choosing $a$ large enough.\\

\noindent When $\alpha\in(0,1/2)$, we see that the last term is positive and will dominate the other three terms for sufficiently large $a$.\\

\noindent If $\alpha=0$, the right hand side of \eqref{time_derivative_alpha_0} is positive at the origin precisely when 
 \begin{align*}
     -(24+8(n-1))(m-1)+4(m-1)(n-1)a -2(m-1)(n-1)a^2 + ma^4>0
 \end{align*}
 which again can be made positive by letting $a$ sufficiently large. This confirms that Condition (3) holds for all $\alpha\in[0,1/2)$ or $\alpha=1$.\\

    \textbf{Case 2.} $\alpha\in (\frac{1}{2},1)$. For $b>0$ a constant to be determined, define 
 \begin{align*}
    w = 1 + \frac{\alpha (\frac{3}{2}-\alpha)^{\frac{1}{2}}}{b(1-\alpha)}x_1 -\frac{x_1^4}{12b^2} + \sum_{i=2}^n\Big(-b^2x_i^2 + b\bigg(\frac{3}{2}-\alpha\bigg)^{\frac{1}{2}}x_1x_i^2-x_1^2x_i^2 \Big).
 \end{align*}
 Then the only nonzero components of the Hessian matrix $(D^2 w)$ are
 \begin{align*}
     w_{11}&=-\frac{x_1^2}{b^2} - 2\sum_{i=2}^n x_i^2,
\end{align*}
and for $i=2,\ldots n$,
\begin{align*}
     w_{1i}&=w_{i1}=2b\bigg(\frac{3}{2}-\alpha\bigg)^{\frac{1}{2}}x_i-4x_1x_i\\
     w_{ii}&=-2b^2+2b\bigg(\frac{3}{2}-\alpha\bigg)^{\frac{1}{2}}x_1-2x_1^2.
 \end{align*}
 Condition (1) follows immediately by substituting $x_i=0$ for all $i=1,\ldots,n$, as $w_{ii} < 0$ for all $i=2,\ldots, n$ and all other components of the Hessian will become zero. To verify Condition (2), notice that the determinant of the $j^{th}$ leading principal minor, for $j=2,\ldots, n$ is given by
 \begin{align*}
     \det([D^2w]_j)&=(-1)^j(2b^2)^{j-1}\Big(\frac{x_1^2}{b^2} +2\sum_{i=2}^nx_i^2 - 2(3/2-\alpha)\sum_{i=2}^jx_i^2\Big) + \text{higher order terms}.
 \end{align*}
Since $\alpha\in(\frac{1}{2},1)$, $$\frac{x_1^2}{b^2} +2\sum_{i=2}^nx_i^2 - 2(3/2-\alpha)\sum_{i=2}^jx_i^2>0$$ away from the origin for all $j=2,\ldots, n$. It follows that for $\rho>0$ sufficiently small, $$ \det([D^2w]_1)<0, \ \det([D^2w]_{(2k)})>0 , \ \det([D^2w]_{(2k+1)})<0 $$ for $(2k),(2k+1)\in \{2,\ldots,n\}$. By the leading principal minors method, we have that the Hessian $(D^2w)$ is negative definite. To confirm Condition (3) holds, note that $w=1$ at the origin and the only non-vanishing derivatives of $w$ are, for $k=2,\ldots,n$,
\begin{align*}
    w_1&= \frac{\alpha(\frac{3}{2}-\alpha)^{\frac{1}{2}}}{b(1-\alpha)}\\
    w_{kk} &= -2b^2\\
    w_{kk1} &= 2b\Big(\frac{3}{2}-\alpha\Big)^{\frac{1}{2}} \\
    w_{1111} &= -\frac{2}{b^2}\\
    w_{kk11} &= -4.
\end{align*}

Substituting these values into \eqref{time_derivative}, we see that

\begin{align*}
    \frac{\partial}{\partial t}w_{11} =& (m-1)\Big(-\frac{2}{b^2}-4(n-1)\Big) + \frac{2(m-1)}{\alpha}\frac{\alpha(3/2-\alpha)}{(1-\alpha)}2(n-1) \\
    &+ \Big(\frac{m-1}{\alpha}\Big)\Big(\frac{1}{\alpha}-1\Big)\Big(\frac{\alpha^2(3/2-\alpha)}{(1-\alpha)^2}\Big)(-2(n-1))\\
    &+\frac{1}{\alpha}(1+(m-1)(1-\alpha))\Big(\frac{1}{\alpha}-2\Big)\Big(\frac{1}{\alpha}-1\Big)\Big(\frac{\alpha^2(3/2-\alpha)}{b^2(1-\alpha)^2}\Big)^2,
\end{align*}

which simplifies to  

\begin{align*}
    \frac{\partial}{\partial t}w_{11} =& (m-1)\Big(-\frac{2}{b^2}-4(n-1)\Big) + 2(m-1)\frac{(3/2-\alpha)}{(1-\alpha)}(n-1)\\
    &+ \frac{1}{\alpha}(1+(m-1)(1-\alpha))\Big(\frac{1}{\alpha}-2\Big)\Big(\frac{1}{\alpha}-1\Big)\Big(\frac{\alpha^2(3/2-\alpha)}{b^2(1-\alpha)^2}\Big)^2 \\ 
    =& \frac{-2(m-1)}{b^2}-\frac{C_{\alpha,m}}{b^4}+ (m-1)(n-1)\Big(\frac{2\alpha-1}{1-\alpha}\Big).
\end{align*}

Here, $C_{\alpha,m}$ is a positive constant depending only on $m$ and $\alpha$.
Since $\frac{1}{2} < \alpha < 1$, the third term in the last line is strictly positive and thus we can take $b$ sufficiently large to ensure that Condition (3) of Lemma \ref{construction} is satisfied.

\end{proof}
In order to prove the Main Theorem, we follow a similar strategy to the one used to prove Theorem 1.1 in \cite{chau-weinkove24}. It suffices to construct a function $v$ for each $\alpha \in [0,1] \backslash \{\frac{1}{2}\}$ such that at time $t=0$, $v_0$ is $\alpha$-concave with non-vanishing gradient at the boundary of the domain $B$. The constructed $v$ would then need to lose $\alpha$-concavity instantaneously after $t=0$. This can be achieved by constructing $v$ such that it has exactly one second derivative (here, we use $v_{11}$) that is equal to zero at $t=0$ and positive for $t \in (0,\delta)$.

Because it is easier to verify the condition of concavity than the condition of $\alpha$-concavity, we will indirectly construct $v$ by instead constructing $w=v^{\alpha}$. We then rewrite the three conditions of Theorem \ref{main_thm} as in Lemma \ref{construction} above. Specifically, for any function $w$ that satisfies conditions (1) and (3) in Lemma \ref{construction}, the corresponding $v$ will satisfy condition (3) of Theorem \ref{main_thm}; similarly, any $w$ that satisfies (2) of Lemma \ref{construction} will have a corresponding $v$ satisfying (1) and (2) of Theorem \ref{main_thm}.

\begin{proof}[Proof of Theorem \ref{main_thm}]
We have constructed a positive function $w$ on $\overline{B_\rho(0)}$ for some $\rho>0$ that is $\alpha$-concave everywhere at $t=0$ and not $\alpha$-concave immediately after $t=0$. We will use $w$ to construct a function $v_0$ which in addition has non-vanishing gradient on $\partial B_\rho(0)$. The result in Theorem \ref{main_thm} then follows by scaling to the unit ball. To proceed, we first use $w$ to construct a function $\tilde{w}$ that will have non-vanishing gradient on the boundary. This construction will build off of auxiliary functions defined in the proof of Theorem 1.1 in  \cite{chau-weinkove24}. Define a smooth cutoff function $\psi: \overline{B_\rho(0)} \rightarrow [0,1]$ such that 

\begin{align*}
    \psi(x) = \begin{cases} 
                    1 & x\in \overline{B_\frac{\rho}{2}(0)} \\
                    0 & x\in \overline{B_{\rho}(0)}\backslash B_{\frac{3\rho}{4}(0)}
                \end{cases}
\end{align*}
whose first and second derivatives are each bounded by some constant. Since $\psi$ is a fixed function, there exists a uniform constant $C$ such that 
\begin{align}\label{psi_bound}
    |D\psi| + |D^2\psi| \leq C.
\end{align}
Furthermore, define a radial concave function $F: \overline{B_{\rho}(0)} \rightarrow \mathbb{R}$ as $F(x) = f(|x|)$, for some continuous decreasing concave function $f: [0,\rho) \rightarrow \mathbb{R}$.

For the case of $\alpha \neq 0,1$, 
\begin{align*}
    f(r) := \begin{cases} 
                    \big(1+\frac{\alpha}{4}\big)\big(\frac{\rho}{2}\big)^{\alpha} &  0\leq r\leq \frac{\rho}{4} \\
                    (\rho-r)^{\alpha} & \frac{\rho}{2}\leq r \leq \rho
                \end{cases}
\end{align*}
$F$ is therefore smooth on $B_{\rho}(0)$ and vanishes on $\partial B_{\rho}(0)$. Crucially, the derivatives of $F$ are bounded such that they are zero inside $B_{\frac{\rho}{4}}(0)$ and such that the following three conditions hold:
\begin{align*}
    f' &\leq -\alpha\big(\frac{\rho}{2}\big)^{\alpha-1} \\
    f' &\leq -\alpha\big(\frac{\rho}{2}\big)^{\alpha-1} \\
    f'' &\leq \alpha(\alpha-1)\big(\frac{\rho}{2}\big)^{\alpha-2}. \\
\end{align*}
We write this as $$f',f'' \leq -\frac{1}{C}$$ for a uniform positive constant $C$, so that on $B_{\rho}(0) \backslash B_{\frac{\rho}{2}}(0)$,

\begin{align}\label{D2Fbound}
    D^2F \leq -\frac{1}{C}\Id.
\end{align}
We now define $\tilde{w}: \overline{B_{\rho}(0)} \rightarrow \mathbb{R}$ as $\tilde{w} = AF + \psi w$ for a positive constant $A$. Because $f$ is constant and because $\psi$ is 1 within $B_{\frac{\rho}{4}}(0)$, the three conditions of Lemma 1 are satisfied by $\tilde{w}$ given our construction of $w$. Furthermore, because $\psi$ is a cutoff function equal to 0 near $\partial B_\rho(0)$, the gradient of $\tilde{w}$ on $\partial B_\rho(0)$ is equal to the gradient of $A^{\frac{1}{\alpha}}(\rho-|x|))$ which is non-vanishing. Finally, we also note that the construction of $F$ ensures $D^2\tilde{w} <0$ outside of the origin. To see this, we consider the two cases of $0\leq r\leq\frac{\rho}{2}$ and $\frac{\rho}{2}< r \leq \rho$. 

In the first case, we can simplify 

\begin{align}\label{D2wfull}
    D^2\tilde{w} = AD^2F+\psi D^2w + (D^2\psi)w + 2D\psi \cdot Dw
\end{align}
by substituting $D\psi = 0$ and \eqref{D2Fbound} to compute that

$$D^2\tilde{w} = AD^2F + D^2w < 0.$$
This is negative because both $F$ and $w$ are constructed to be concave. 

In the second case, we simplify \eqref{D2wfull} through substituting $\psi = 0$ and the derivative bounds on $\psi$ as stated in \eqref{psi_bound} to arrive at the following equation, with $A$ chosen suitably large.

$$D^2\tilde{w} = \Big(C-\frac{A}{C}\Big)\Id < 0.$$

Note that this inequality is only satisfied with a choice of a large constant $A$ that depends on the values of $\alpha$ and $\rho$. As a result, we have shown that $\tilde{w}$ is concave on $B_{\rho}(0)$ and has $D^2\tilde{w} < 0$ everywhere in $B_{\rho}(0)$ except for the origin. This is therefore suitable to form the foundation of our constructed function $v$ that will lose $\alpha$-concavity after $t=0$. Specifically, we can define the initial data of $v$ as follows:
$$v_0 = \tilde{w}^{\frac{1}{\alpha}}.$$
Near $\partial B_{\rho}(0)$, we have that $\tilde{w} = AF \Rightarrow v_0 = A^{\frac{1}{\alpha}}(\rho - |x|)$, meaning that $v_0$ is smooth. Furthermore, the solution $v$ such that $v(0)=v_0$ is smooth, as a result of Proposition 7.2 in \cite{vasquez07}. Furthermore, the gradient of $v$ does not vanish at $\partial B_{\rho}(0)$ as the term $AF$ has nonzero gradient at that boundary.\\

Let $\lambda_1(x,t)$ be the largest eigenvalue of the Hessian matrix $D^2 v^\alpha(x,t)$ at any given time $t$ and coordinate $x$. We know that $\lambda_1(x,t)$ is distinct from all other eigenvalues at the origin, since $\lambda_1(0,0) = 0$ and all the other eigenvalues are negative at the origin at $t=0$. Hence, $\lambda_1$ is smooth around a small neighborhood around the origin. The fact that $w=v^\alpha$ loses $\alpha$-concavity immediately after $t=0$ then follows form the fact that

$$\frac{\partial}{\partial t} \lambda_1 (0) = \frac{\partial}{ \partial t}(v^\alpha)_{11}\Big|_{x=0} > 0.$$
As a result,
$$\lambda_1(x,t)>0 \text{ for all } (x,t)\neq (0,0).$$
The continuous function $\lambda_1$ is zero and increasing at the origin; in other words, there exists an $\epsilon > 0$ such that $\lambda_1 > 0$ for $t\in (0,\epsilon)$. This means that this construction of $w$ satisfies the three properties of Theorem \ref{main_thm}.

For the case of $\alpha=1$, 
\begin{align*}
    f(r) := \begin{cases} 
                    \frac{7\rho^2}{8} &  0\leq r\leq \frac{\rho}{4} \\
                    \rho^2-r^2 & \frac{\rho}{2}\leq r \leq \rho.
                \end{cases}
\end{align*}

We can define the radially symmetric function $F$ and the function $\tilde{w}$ exactly as above, with the alternative initial data $v_0 = \tilde{w}$ to see that the resulting construction of $w$ satisfies Theorem \ref{main_thm}. 

Finally, for the case of $\alpha=0$, 
\begin{align*}
    f(r) := \begin{cases} 
                    \log{(\frac{\rho}{2})} &  0\leq r\leq \frac{\rho}{4} \\
                    \log{(\rho -r)} & \frac{\rho}{2}\leq r \leq \rho.
                \end{cases}
\end{align*}
For this construction, we can instead take $A,\psi =1$, resulting in $D^2\tilde{w} < 0$ on the entire punctured ball $B_{\rho}(0)\backslash \{0\}$. We can then define initial data $v_0 = e^{\tilde{w}}$.  This initial data is log concave (corresponding to the notion of $\alpha$-concavity in the case where $\alpha=0$), smooth on the closure of the ball, and zero with non-vanishing gradient at the boundary. As above, the resulting construction of $w$ satisfies the three properties of Theorem \ref{main_thm}. This completes the proof.

\end{proof}

\bibliography{references}
\end{document}